\documentclass[11pt,reqno]{amsart}
\usepackage{amsthm,amsfonts,amsmath,amssymb}
\usepackage[pagebackref=true,backref=true,bookmarksopen=true]{hyperref}
\usepackage[letterpaper,margin=1in]{geometry}

\newtheorem{theorem}{Theorem}
\newtheorem{cnj}{Conjecture}
\newtheorem{lemma}{Lemma}

\newcommand{\R}{{\mathbb R}}

\begin{document}

\title{Separation for Schr\"odinger operators. A counterexample to Simon's conjecture}
\author{M.A. Perelmuter}
\address{SCAD Soft Ltd., 3a Osvity Street, Kyiv, 03037, Ukraine}
\email{mikeperelmuter@gmail.com}
\keywords{Separation property, Schr\"odinger operator, counterexample}
\subjclass[2020]{35J10 35P05}

\begin{abstract}
We construct, for every $d\ge 5$, a nonnegative radial potential
$V\in L^\infty_{\mathrm{loc}}(\mathbb{R}^d\setminus\{0\})\cap L^1(\mathbb{R}^d)$ for which
$V(-\Delta\dotplus V)^{-1}$ is not bounded on $L^2(\R^d)$.
\end{abstract}

\maketitle
\section{Introduction}
Let $H_0=-\Delta$ on $L^2(\R^d)$. $H=H_0\dotplus V$ is defined as a self-adjoint operator via the form sum.
Clearly, $\mathcal{D}(H_0)\cap \mathcal{D}(V) \subset \mathcal{D}(H)$ and any condition implying
\begin{equation}\label{Separate}
\mathcal{D}(H) \subset \mathcal{D}(V)
\end{equation}
implies $H=H_0+V$. W.~N.~Everitt and M.~Giertz, in \cite{Ever}, coined the term {\it separation property} for relation \eqref{Separate}.
There are many publications devoted to sufficient conditions for the separation property of various differential operators. We mention only a few of them here: \cite{Ever}, \cite{Ever73}, \cite{Ever77}, \cite{Davies}, \cite{Davies2}, \cite{Otelbaev}, \cite{Kalf}, \cite{Okazawa},  \cite{GS}, \cite{MS}.

In 1983 B.~Simon \cite{Simon2} formulated the following conjecture
\begin{cnj}\label{SimonCnj}
Let $V(x)=V(|x|)$ on $\R^d; d\geq 5$. Suppose
\begin{enumerate}
  \item $V\geq 0$
  \item $V$ is bounded on any closed set avoiding $0$.
\end{enumerate}
Then $H_0+V$ is closed on $\mathcal{D}(H_0)\cap \mathcal{D}(V)$.
\end{cnj}
The fact that the conjecture is false for $d=4$ was proved by B.~Simon \cite{Simon2} himself. To the best of our knowledge, this conjecture has remained open for $d\geq 5$, and the purpose of this note is to construct a counterexample.
\par
We construct a radial potential $V\ge0$, singular only at the origin, for which
$V(-\Delta+V)^{-1}$ fails to be bounded on $L^2(\R^d)$. The potential
is supported on a countable family of thin, pairwise disjoint spherical shells
$A_n=[r_n,\,r_n+w_n],\ r_n \to 0$, accumulating at the origin, on which $V$ takes the value
$h_n\sim r_n^{\,1-d}$. The widths $w_n$ are chosen so that the quantity $h_n w_n$, which may be viewed as the mass of each shell, is independent of $n$: the shells become thinner and taller as $n\to\infty$.
\par
For this potential we construct $f\in L^2(\R^d)$ and $u\in \mathcal{D}(H)$ such that $Hu=f$ and $Vu\notin L^2(\R^d)$. By spherical symmetry the construction reduces to a one-dimensional Bessel-type equation, which is then converted into a Volterra integral equation with a special integral kernel.

\section{A counterexample}
\begin{theorem}
Let $d\ge 5$. There exists a radial potential
$$0\le V\in L^\infty_{\mathrm{loc}}(\R^d\setminus\{0\})\cap L^1({\R^d})$$
such that the operator $V(-\Delta\dotplus V)^{-1}$ is not bounded on $L^2(\R^d)$.
\end{theorem}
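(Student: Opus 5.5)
We follow the construction sketched in the introduction. Fix $d\ge5$ and a decreasing sequence $r_n\to0$, for instance $r_n=2^{-n}$. On the shells $A_n=\{r_n\le |x|\le r_n+w_n\}$ we put $V=h_n$ with $h_n=r_n^{1-d}$, and $V=0$ elsewhere. The widths are $w_n=c\,r_n^{d-1}$ with $c>0$ small, so that $h_nw_n=c$ is independent of $n$ and the shells are pairwise disjoint. Then $V\ge0$ is radial and bounded away from the origin. Moreover $\int V\,dx\asymp\sum_n r_n^{d-1}w_nh_n=c\sum_n r_n^{d-1}<\infty$, so $V\in L^1$.

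The goal is to produce a radial $u\in\mathcal D(H)$ with $f=Hu\in L^2$ but $Vu\notin L^2$. This shows that $V H^{-1}$ (equivalently $V(H+1)^{-1}$) is unbounded. Indeed, if it were bounded, every $u\in\mathcal D(H)$ would satisfy $Vu\in L^2$.

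Writing $u=u(r)$ and setting $\phi=r^{(d-1)/2}u$, the equation $-\Delta u+Vu=f$ becomes the Bessel-type ODE
$$-\phi''+\frac{(d-1)(d-3)}{4r^2}\phi+V\phi=g,\qquad g=r^{(d-1)/2}f .$$
The idea is to let $u$ stay of size comparable to a fixed constant on every shell. Then
$$\|Vu\|_2^2\asymp\sum_n h_n^2w_nr_n^{d-1}=c\sum_n h_n r_n^{d-1}=c\sum_n 1=\infty .$$
The mass condition is exactly what makes this divergence happen while $V$ itself remains integrable.

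The heart of the proof is to exhibit such a $u$ with $f\in L^2$. The natural route is to prescribe the solution rather than $f$. We take $u$ smooth off the shells, equal to $1$ near the origin apart from the shells, and compactly supported. Inside each shell $u$ solves $-u''-\frac{d-1}{r}u'+h_nu=0$, so that $f=0$ there. Across each shell $u'$ then jumps by approximately $h_nw_n\,u(r_n)=c\,u(r_n)$, while $u$ itself changes by only $O(w_n)$. Between consecutive shells we must bend $u$ back so that its derivative is restored to $0$. This costs a smooth correction on $[r_n+w_n,r_{n-1}]$ with $|u''|\sim c/r_n$ and $\Delta u\sim c/r_n$. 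Its $L^2$ contribution is $\sum_n (c/r_n)^2 r_n^{d}=c^2\sum_n r_n^{d-2}<\infty$. Hence $f=-\Delta u+Vu$ (which is $0$ on the shells) lies in $L^2$, and $Vu$ fails to be in $L^2$ by the computation above.

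Making this rigorous means solving the ODE as a Volterra integral equation from the origin outward, with the kernel built from the solutions $r^{0}$ and $r^{2-d}$ of the free radial equation. We must then check that the accumulated perturbations keep $u$ bounded below, say $u\ge1/2$ on every shell, which holds because $\sum w_n$ and the cumulative bending errors are small when $c$ is small.

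The main obstacle is verifying that $u\in\mathcal D(H)$, i.e.\ that the form domain condition holds and the equation is satisfied in the form sense near $0$ without a delta-type contribution at the origin. This is where $d\ge5$ enters. We need $\nabla u\in L^2$ and $u\in L^2$ near $0$, and we need the form identity $\langle\nabla u,\nabla\varphi\rangle+\langle Vu,\varphi\rangle=\langle f,\varphi\rangle$ for all $\varphi\in C_c^\infty$. The boundary term at radius $\epsilon$ is of size $\epsilon^{d-1}\sup|u'|\lesssim\epsilon^{d-2}\to0$. The integrability of $Vu\bar\varphi$ follows from $V\in L^1$ and $u$ bounded. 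For $d=4$ the estimates above are borderline, consistent with Simon's separate treatment of that case. If the crude $L^2$ bound on $f$ fails, the fallback is to choose $w_n$ and the spacing of the $r_n$ more sparsely, for instance $r_n=2^{-2^n}$, which gives room in every series without affecting $\sum_n 1=\infty$.
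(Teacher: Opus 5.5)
Your proposal is correct. It shares the paper's overall strategy: the same shell potential with $h_nw_n$ constant, the same criterion (a $u\in\mathcal D(H)$ with $Vu\notin L^2$), and the same final check via the weak equation on $\R^d$. What differs is how $u$ is built. The paper solves the homogeneous radial equation exactly on $(0,1)$ through the Volterra equation. It gets $1\le U\le C_0$ by Gronwall and $0\le U'\le C$ from $r_n^{1-d}\sum_{k\ge n}r_k^{d-1}\le C$, takes $f=\chi_{\{1\le|x|\le 2\}}$, and glues to the exterior tail $b\,r^{2-d}$ with the matching lemma. There $d\ge5$ is used only for $\int_2^\infty r^{3-d}\,dr<\infty$. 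You instead prescribe a compactly supported $u$ (exact solution on the shells, a monotone bend in the gaps) and accept $f\ne0$ in the gaps, which is square integrable because $\sum_n r_n^{d-2}<\infty$. This removes the gluing lemma and the tail condition. The price is a less explicit $f$ and a construction with no first shell, so it must be normalized at the origin. Do that by linearity rather than by smallness of $c$: passing from $r_n$ (where $u'(r_n)=0$) to $r_{n-1}$ multiplies $u$ by a factor $\mu_n\in[1,1+Cr_n]$. Hence $a_n=\prod_{k>n}\mu_k$ converges and gives $u(0^+)=1$ and $1\le u\le C$. In particular $u$ is not literally $1$ in the gaps, since $(r^{d-1}u')'=h_nr^{d-1}u>0$ on every shell. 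The two routes also combine: take the paper's $U$ and a radial cutoff $\eta$ equal to $1$ for $|x|\le 1/2$ and supported in $|x|<1$; then $\eta U\in\mathcal D(H)$ with $H(\eta U)=-2\eta'U'-U\Delta\eta$. Two corrections. First, to pass from $\varphi\in C_c^\infty$ to $u\in\mathcal D(H)$ you need that $C_c^\infty(\R^d)$ is a form core for $q$ when $0\le V\in L^1_{\mathrm{loc}}$; this should be cited. Second, $d\ge5$ does not enter through the boundary term at the origin, nor is $d=4$ borderline there; your estimates use only $d\ge3$.
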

\begin{proof}
Let $r_n=2^{-n},\ w_n=r_n^{d-1},\ n\ge 2,$ and $A_n=[r_n,r_n+w_n]$.
Since $\frac{w_n}{r_n}=r_n^{d-2}<1,$ we have $r_n+w_n<2r_n=r_{n-1},$ so the intervals $A_n$ are pairwise disjoint.
\par
Fix $\lambda>0$ and define $h_n=\frac{\lambda}{w_n}=\lambda r_n^{1-d}$,
$$V(r)=
\begin{cases}
h_n,&r\in A_n,\\
0,&r\notin\bigcup_{n\ge 2}A_n.
\end{cases}$$
Thus $V$ is radial and nonnegative. Since every compact subset of $\R^d\setminus\{0\}$ intersects only finitely many layers, we have $V\in L^\infty_{\mathrm{loc}}(\R^d\setminus\{0\}).$ The inclusion $V\in L^1(\R^d)$ is a consequence of the equality $V(r)=0$ for $r\notin\bigcup_{n\ge 2}A_n$ and the estimate
\begin{equation*}
\|V\|_{L^1(\R^d)}=\omega_{d-1}\int\limits_0^1 r^{d-1}V(r)\,dr\le \omega_{d-1} 2^{d-1}\lambda\sum_{n\ge 2}r_n^{d-1}<\infty,
\end{equation*}
where $\omega_{d-1}$ is the surface area of the unit $(d-1)$-dimensional sphere.
\par
Define the quadratic form
$$q[u]=\int\limits_{\R^d}|\nabla u|^2\,dx+\int\limits_{\R^d}V|u|^2\,dx$$
 with domain $\mathcal{D}(q)=H^1(\R^d)\cap L^2(\R^d,V\,dx)$. Since $0\le V\in L^1(\R^d)$, this is a densely defined closed nonnegative quadratic form. Let $H$ be the associated self-adjoint operator (the form realization of $-\Delta+V$).
 Let $f(x)=\chi_{\{1\le |x|\le 2\}}\in L^2(\R^d)$ (the indicator function of the set $\{1\le |x|\le 2\}$). We shall construct a radial solution $u\in \mathcal{D}(H)$ such that $Hu=f$. It will then follow that $u=H^{-1}f=(-\Delta\dotplus V)^{-1}f.$
\par
To construct the function $u$, we will consider three intervals: $(0,1), [1,2], (2,\infty)$.
\par
1. As a first step we consider a solution near the origin ($0 < r < 1$).
By spherical symmetry, the construction of the function $u$ can be reduced to solving a one-dimensional Bessel-type equation.
Consider the radial homogeneous equation
$$-U''(r)-\frac{d-1}{r}U'(r)+V(r)U(r)=0.$$
Define $U$ as the solution of the Volterra equation
\begin{equation}\label{UVoltera}
U(r)=1+\frac{1}{d-2}\int\limits_0^r\left(s-r^{2-d}s^{d-1}\right)V(s)U(s)\,ds.
\end{equation}
Since $V\ge 0$, the kernel is nonnegative, and $0\le s-r^{2-d}s^{d-1}\le s,\  0<s<r$. Moreover,
\begin{equation}\label{rV}
\int\limits_0^1 rV(r)\,dr=\sum_{n\ge 2}h_n\int\limits_{r_n}^{r_n+w_n}r\,dr\le\sum_{n\ge 2}h_n(r_n+w_n)w_n\le
2\lambda\sum_{n\ge 2}r_n<\infty.
\end{equation}
By the standard existence and uniqueness theory for linear Volterra integral equations of the second kind with integrable kernels, equation \eqref{UVoltera} has a unique continuous solution on $[0,1]$.
\par
Differentiating \eqref{UVoltera}, we obtain
$$U'(r)=r^{1-d}\int\limits_0^r s^{d-1}V(s)U(s)\,ds\ge 0,$$
and therefore
\begin{equation}\label{Ule1}
U(r)\ge U(0)=1.
\end{equation}
Moreover, \eqref{UVoltera} gives
$$U(r)\le 1+\frac{1}{d-2}\int\limits_0^r sV(s)U(s)\,ds.$$
Hence Gronwall's inequality and \eqref{rV} imply
\begin{equation}\label{Uge1}
1\le U(r)\le\exp\left(\frac{1}{d-2}\int\limits_0^1sV(s)\,ds\right)=:C_0<\infty.
\end{equation}
We next record an estimate for $U'$ valid on all of $(0,1)$, not merely on the layers $A_n$ themselves. For $r\in(0,1)$ let $n=n(r)\ge 2$ be the unique integer with $r\in[r_n,r_{n-1})$, where $r_1:=1$.
Since $A_k\subset[r_k,r_{k-1})$ for every $k\ge 2$, the set $\{s\le r:V(s)\ne 0\}$ is contained in $\bigcup_{k\ge n}A_k$, and $r^{1-d}\le r_n^{1-d}$ because $r\ge r_n$. Hence
\begin{equation}\label{UPrime}
\begin{aligned}
U'(r)
&\le
C_0r^{1-d}\int\limits_0^r s^{d-1}V(s)\,ds\\
&\le
C_0 r_n^{1-d}
\sum_{k\ge n}h_kw_k (r_k+w_k)^{d-1}\\
&\le
C\lambda r_n^{1-d}\sum_{k\ge n}r_k^{d-1}
\le C.
\end{aligned}
\end{equation}
Here and below $C$ denotes a finite constant independent of $n$ (hence of $r$). Since this bound holds for every $r\in(0,1)$, we conclude
$$\int\limits_0^1|U'(r)|^2r^{d-1}\,dr\le C^2\int\limits_0^1r^{d-1}\,dr<\infty.$$
\par\medskip
On $(0,1)$ put
\begin{equation}\label{ucU}
u(r)=cU(r),
\end{equation}
where $c>0$ will be chosen below so that the solutions can be matched continuously with their first derivatives at the interval endpoints.
\par\medskip
2. On $1\le r \le 2$, $V=0$ and $f=1$. Using the standard representation of the Laplace operator acting on radial functions, we have
\begin{equation}\label{r12}
-\left(r^{d-1}u'(r)\right)'=r^{d-1}.
\end{equation}
With the initial conditions $u(1)=cU(1)$ and $u'(1)=cU'(1)$, there exists a unique solution on $[1,2]$.
\par\medskip
3. For $r>2$, we have $V=f=0$.  We consider the homogeneous equation
$$-\bigl(r^{d-1}v'(r)\bigr)'=0$$
and choose the decaying solution
$$v(r)=b(c)r^{2-d},$$
where $b(c)$ is chosen so that the solution is continuous at $r=2$, namely, $b(c)=2^{d-2}u(2)$. For this choice, we have $v'(2)=-\frac{d-2}{2}u(2)$.
Consequently, in order for the solution defined on $[1,2]$ to be $C^1$-matched with the exterior solution at $r=2$, we need $u(2)+\frac{2}{d-2}u'(2)=0$.
\par
\par
By Lemma~\ref{lem:gluing} below, the unique solution of \eqref{r12} on $[1,2]$ satisfying $u(1)=cU(1)$, $u'(1)=cU'(1)$ satisfies
$$u(2)+\frac{2}{d-2}u'(2)=c\left(U(1)+\frac{U'(1)}{d-2}\right)-\frac{3}{2(d-2)}.$$
Hence, there exists a unique value of $c>0$ for which $u(2)+\frac{2}{d-2}u'(2)=0$.
For this choice of $c$, defining
$$u(r)=b(c)r^{2-d},\qquad r\geq 2,$$
gives a $C^1$ solution across $r=2$.
\par\medskip
Since $d\ge 5$,
$$\int\limits_2^\infty |u(r)|^2r^{d-1}\,dr=b^2\int\limits_2^\infty r^{3-d}\,dr<\infty.$$
Together with \eqref{Uge1}, this gives $u\in L^2(\R^d)$.
\par\medskip
Now our function $u$ is defined for all $r\in (0,\infty)$ and we have to verify that $u$ belongs to $\mathcal{D}(H)$. We first prove that $u\in H^1(\R^d)$. Near the origin, \eqref{UPrime} implies
$$\int\limits_0^1|U'(r)|^2r^{d-1}\,dr<\infty.$$
Since $u=cU$ on $(0,1)$, the same integrability property holds for $u$.
On $(1,2)$ the solution is smooth, and for $r\ge 2$ the formula $u(r)=br^{2-d}$ gives $|u'(r)|\lesssim r^{1-d}$. Hence
$$\int\limits_2^\infty|u'(r)|^2r^{d-1}\,dr<\infty.$$
Thus $u\in H^1(\R^d)$.
\par
Next we estimate the potential part of the quadratic form. By \eqref{Uge1}, $u(r)\le cC_0,\  0<r\le 1$.
Therefore, using $V\in L^1(\R^d)$ and $V=0$ on $1\le r$, we have
$$\int\limits_{\R^d}V|u|^2\,dx =\int\limits_{|x|<1}V|u|^2\,dx\le c^2C_0^2\|V\|_{L^1(\R^d)}<\infty.$$
Thus $u\in \mathcal{D}(q)$, which yields a global radial solution of
$-\Delta u+Vu=f$. This differential equation holds classically away from the layer endpoints, and $u$ and $u'$ are continuous across all these endpoints. Hence it holds in the distributional sense on $\R^d$:
\begin{equation}\label{Distributional}
-\Delta u+Vu=f.
\end{equation}
Here $Vu\in L^1_{\mathrm{loc}}(\R^d)$ because $u$ is locally bounded and $V\in L^1(\R^d)$.
\par
Hence, for every $\varphi\in C_c^\infty(\R^d)$,
$$q[u,\varphi]=\int_{\R^d}f\varphi\,dx.$$
Since $C_c^\infty(\mathbb R^d)$ is a form core for $q$ (see, for example, \cite[Theorem 2.1]{Simon1}), this identity extends to every $\varphi\in\mathcal D(q)$. By the definition of the operator associated with the closed quadratic form $q$, it follows that
$$u\in\mathcal D(H),\quad Hu=f.$$
\par\medskip
To complete the proof of the theorem, it remains to show that $Vu\notin L^2(\R^d)$.
\par
By \eqref{Ule1} and \eqref{ucU}, $u(r)\ge c,\  0<r<1$. On $A_n$ we have $V=h_n$. Hence
$$\int\limits_{A_n}(Vu)^2\,dx = \omega_{d-1}h_n^2 \int\limits_{r_n}^{r_n+w_n}u(r)^2r^{d-1}\,dr\ge c^2\omega_{d-1}h_n^2r_n^{d-1}w_n.$$
Since $h_n=\frac{\lambda}{w_n},\  w_n=r_n^{d-1}$, we obtain $h_n^2r_n^{d-1}w_n=\lambda^2$.
Consequently
$$\int\limits_{A_n}(Vu)^2\,dx\ge c^2\omega_{d-1}\lambda^2=:\delta>0,$$
where $\delta$ is $n$-independent.
\par
The sets $A_n$ are pairwise disjoint, and therefore
$$\|Vu\|_{L^2(\R^d)}^2\ge\sum_{n\ge 2}\int\limits_{A_n}(Vu)^2\,dx\ge\sum_{n\ge 2}\delta=\infty.$$
Thus $Vu\notin L^2(\R^d)$. Hence $V(-\Delta\dotplus V)^{-1}$ is not a bounded operator on $L^2(\R^d)$.
\end{proof}

\section{Appendix: the gluing identity}
\begin{lemma}\label{lem:gluing}
Let $d\ge 3$, and let $u$ be the solution of
$$-\left(r^{d-1}u'(r)\right)'=r^{d-1},\qquad 1\le r\le 2,$$
subject to the initial conditions $u(1)=cU(1)$, $u'(1)=cU'(1)$, where $c,\,U(1),\,U'(1)$ are given real constants. Then
$$u(2)+\frac{2}{d-2}u'(2)=c\left(U(1)+\frac{U'(1)}{d-2}\right)-\frac{3}{2(d-2)}.$$
\end{lemma}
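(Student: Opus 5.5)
The plan is to integrate the equation explicitly. It is linear, first order in the flux $r^{d-1}u'$, and has constant right-hand side data, so the proof should reduce to a direct computation with no analytic input beyond the fundamental theorem of calculus. Write $a=u(1)=cU(1)$ and $p=u'(1)=cU'(1)$ for brevity.

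First, integrate $-\bigl(r^{d-1}u'\bigr)'=r^{d-1}$ from $1$ to $r$. This gives
$$r^{d-1}u'(r)=p-\frac{r^d-1}{d},\qquad\text{so}\qquad u'(r)=\Bigl(p+\frac1d\Bigr)r^{1-d}-\frac rd .$$
Integrating once more from $1$ to $r$, and using $d\ge 3$ so that the antiderivative of $r^{1-d}$ is $r^{2-d}/(2-d)$, gives
$$u(r)=a+\frac{p+\frac1d}{d-2}\bigl(1-r^{2-d}\bigr)-\frac{r^2-1}{2d}.$$
Uniqueness of the solution of this initial value problem is automatic, since the formula is forced by the two integrations.

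Next, evaluate at $r=2$ and form $u(2)+\frac{2}{d-2}u'(2)$. The key observation is that the terms involving $2^{2-d}$ cancel: $u(2)$ contains $-\frac{p+1/d}{d-2}2^{2-d}$, while $\frac{2}{d-2}u'(2)$ contains $+\frac{p+1/d}{d-2}2^{2-d}$. This cancellation is exactly why the combination $u(2)+\frac{2}{d-2}u'(2)$ matches the exterior decaying solution $r^{2-d}$. What remains is
$$a+\frac{p}{d-2}+\frac{1}{d(d-2)}-\frac{3}{2d}-\frac{4}{d(d-2)}.$$

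Finally, collect the constants:
$$-\frac{3}{d(d-2)}-\frac{3}{2d}=-\frac{3(2+d-2)}{2d(d-2)}=-\frac{3}{2(d-2)}.$$
Substituting $a=cU(1)$ and $p=cU'(1)$ then yields the claimed identity.

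The only step requiring care is this bookkeeping of constants, namely the three terms $\frac{1}{d(d-2)}$, $-\frac{3}{2d}$ and $-\frac{4}{d(d-2)}$. There is no genuine obstacle.
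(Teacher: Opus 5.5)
Your proof is correct and is essentially the paper's argument. The paper writes the general solution $A+Br^{2-d}-\frac{r^2}{2d}$ and fits $A,B$ to the data at $r=1$. You instead integrate the flux form twice from $1$, which yields the same coefficient $B=-\frac{p+1/d}{d-2}$, the same cancellation of the $2^{2-d}$ terms, and the same constants $\frac{1}{d(d-2)}-\frac{3}{2d}-\frac{4}{d(d-2)}=-\frac{3}{2(d-2)}$.
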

\begin{proof}
The general solution of the inhomogeneous equation is
$$u(r)=A+Br^{2-d}-\frac{r^2}{2d} \quad \text{and} \quad u'(r)=B(2-d)r^{1-d}-\frac{r}{d}.$$
Imposing the initial conditions at $r=1$:
$$u(1)=A+B-\frac{1}{2d}=cU(1),\qquad u'(1)=B(2-d)-\frac1d=cU'(1).$$
These equations give
$$B=-\frac{cU'(1)+\frac1d}{d-2}=-\frac{cU'(1)}{d-2}-\frac{1}{d(d-2)},\quad A=cU(1)-B+\frac{1}{2d}.$$
Now evaluate at $r=2$:
$$u(2)=A+B\,2^{2-d}-\frac2d,\qquad u'(2)=B(2-d)2^{1-d}-\frac2d.$$
The second equality gives
$$\frac{2}{d-2}u'(2)=\frac{2}{d-2}\Big[B(2-d)2^{1-d}-\frac2d\Big]=-B\,2^{2-d}-\frac{4}{d(d-2)}.$$
Adding this to $u(2)$:
$$u(2)+\frac{2}{d-2}u'(2)=A-\frac2d-\frac{4}{d(d-2)}.$$
Substituting $A=cU(1)-B+\dfrac{1}{2d}$ and then $B=-\dfrac{cU'(1)}{d-2}-\dfrac{1}{d(d-2)}$:
$$u(2)+\frac{2}{d-2}u'(2)=cU(1)+\frac{cU'(1)}{d-2}+\frac{1}{d(d-2)}-\frac{3}{2d}-\frac{4}{d(d-2)}=c\left(U(1)+\frac{U'(1)}{d-2}\right)-\frac{3}{d(d-2)}-\frac{3}{2d}.$$
Finally, the remaining constants combine as
$$\frac{3}{d(d-2)}+\frac{3}{2d}=\frac{3}{2(d-2)},$$
which yields the claimed identity.
\end{proof}

\smallskip
\noindent\textbf{Acknowledgements}\quad
The author acknowledges the use of AI tools. All mathematical arguments and proofs in the final manuscript were checked and written by the author.

\end{document}